\documentclass{elsarticle}
\newcommand{\f}{\operatorname}
\usepackage{lineno,hyperref}
\usepackage{graphicx}
\usepackage{amsmath}
\usepackage{amssymb}
\usepackage{amsthm}
\usepackage{multirow}
\usepackage{graphicx} 
\usepackage{blindtext}
\usepackage{xcolor}

\newcommand{\R}{\mathbb{R}}
\newcommand{\N}{\mathbb{N}}

\pagestyle{myheadings}
\newtheorem{theorem}{Theorem}[section]
\newtheorem{corollary}[theorem]{Corollary}

\newtheorem{proposition}[theorem]{Proposition}

\theoremstyle{definition}
\newtheorem{definition}[theorem]{Definition}
\modulolinenumbers[5]




\bibliographystyle{elsarticle-num}

\begin{document}

\begin{frontmatter}

\title{Posterior properties of the Weibull distribution for censored data}
\tnotetext[mytitlenote]{Posterior properties of the Weibull distribution for censored data}

\author{Eduardo Ramos$^1$ and Pedro L. Ramos$^1$}

\ead{eduardoramos@usp.br}

\address{$^1$Institute of Mathematical and Computer Sciences, University of S\~ao Paulo, S\~ao Carlos, S\~ao Paulo, Brazil}

\begin{abstract}
The Weibull distribution is one of the most used tools in reliability analysis. In this paper, assuming a Bayesian approach, we propose necessary and sufficient conditions to verify when improper priors lead to proper posteriors for the parameters of the Weibull distribution in the presence of complete or right-censored data. Additionally, we proposed sufficient conditions to verify if the obtained posterior moments are finite. These results can be achieved by checking the behavior of the improper priors, which are applied in different objective priors to illustrate the usefulness of the new results. As an application of our theorem, we prove that if the improper prior leads to a proper posterior, the posterior mean, as well as other higher moments of the scale parameter, are not finite and, therefore, should not be used.
\end{abstract}
\begin{keyword}
objective prior; proper posterior; posterior moments; Weibull distribution.
\MSC[2010]     62F15\sep      62N05  
\end{keyword}

\end{frontmatter}

\section{Introduction}

The Weibull distribution plays a central role in reliability analysis as one of the most important generalizations of the exponential distribution. Introduced by Weibull \cite{weibull1951statistical}, his distribution has been used in many applications as baseline distribution. Let $X$ be a non-negative random variable with the Weibull distribution given by 
\begin{equation}\label{pdfgamma}
f(x\,| \eta,\beta)=\beta\eta^\beta (\eta x)^{\beta-1}e^{-\eta^\beta x^\beta} ,
\end{equation}
where $\eta>0$ and $\beta>0$ are unknown scale and shape parameters, respectively.

The maximum likelihood estimators, as well as other frequentist methods of inference for Weibull distribution, are standard in the reliability literature \cite{teimouri2013comparison}. From the Bayesian point of view, a prior distribution must be chosen for the parameters. Objective priors have already been derived for the Weibull distribution in the literature considering complete data, for instance, Sun \cite{sun1997note} have discussed many objective priors for the Weibull model such as the Jeffreys prior \cite{jeffreys1946theory}, reference priors \cite{bernardo1979a, bernardo2005, berger2015}, matching priors \cite{tibshirani1989}, and Moala et al. \cite{moala2009note} considered these priors as well as the maximum data information (MDI) prior \cite{zellner1,zellner2} for the reliability function. Some recent derivations of objective priors for related functions of Weibull distribution can be seen in \cite{lee2015noninformative,kang2017noninformative,lee2017objective}.

A major drawback related to the use of objective priors is that they are improper and could lead to improper posteriors. Northrop and Attalides \cite{northrop2016} argued that ``$\ldots$ there is no general theory providing simple conditions under which improper prior
yields a proper posterior for a particular model, so this must be investigated case-by-case". 
In this paper, for the Weibull distribution, we overcome this problem by providing sufficient and necessary conditions to check when a wide class of posterior distributions is proper. We then proceeded to use such theorem to investigate whether these improper priors lead to proper posterior distributions. Another important issue is related to the computation of the posterior moments since even if the posterior distribution is proper, the related posterior moments (posterior mean, posterior variance, among others) can be infinite. Thus, for the Weibull model, we shall also discuss sufficient conditions that lead the posterior moments to be proper. Based on our findings, further researches will be able to check if the obtained posterior is proper or improper easily, as well as the finiteness of the posterior moments analyzing the behavior of the improper prior directly. Although the proof for the posterior distribution has been considered for complete data, there is no similar proof for random censoring in the literature. Hence our results allow the use of the objective Bayesian analysis for the Weibull distribution in the presence of censoring. The new results are applied in different objective priors such as  independent uniform priors, Jeffreys' first rule \cite{kass1996selection}, Jeffreys' prior  \cite{jeffreys1946theory}, maximal data information (MDI) prior \cite{zellner1, zellner2} and reference priors \cite{bernardo1979a, bernardo2005, berger2015}.

The remainder of this paper is organized as follows. Section 2 introduces essential results that will be used to prove the new theorem. Section 3 presents a theorem that provides necessary and sufficient conditions for the posterior distributions to be proper and also sufficient conditions to check if the posterior moments of the parameters are finite. Section 4 presents the applications of our main theorem in different objective priors. Finally, Section 5 summarizes the study.

\section{Posterior distribution}

Let $X_1,\ldots,X_n$ be an realization of an independent and identically distributed sample where $X\sim$ Weibull$(\eta,\beta)$. Moreover, suppose that the $i$th  individual has a lifetime $X_i$ and a censoring time $C_i$, additionally, the random censoring times $C_i$s are independent of
$T_i$s and their distribution does not depend on the parameters, then the data set is $(t_i,\delta_i)$, where $t_i=\min(X_i,C_i)$ and $\delta_i=I(T_i\leq C_i)$ is an indicator function of the presence of censoring. Considering a prior distribution $\pi(\eta,\beta)$, the joint posterior distribution for $(\eta,\beta)$ is given by 
\begin{equation}\label{principal}
\pi(\eta,\beta|\boldsymbol{x})=\frac{\pi(\eta,\beta)}{d(\boldsymbol{x})} \eta^{m\beta}\beta^{m}\left(\prod_{i=1}^n x_i^{\delta_i\beta}\right)\exp \left\{-\sum_{i=1}^n x_i^\beta \eta^{\beta}\right\},
\end{equation}
where $m=\sum_{i=1}^{n}\delta_i$, and  $d(\boldsymbol{x})$ is a normalized constant in the form 
\begin{equation}\label{posteriord2}
d(\boldsymbol{x})=\int_{0}^{\infty}\int_{0}^{\infty}\pi(\eta,\beta) \eta^{m\beta}\beta^{m}\left(\prod_{i=1}^n x_i^{\delta_i\beta}\right)\exp \left\{-\sum_{i=1}^n x_i^\beta \eta^{\beta}\right\}d\eta d\beta.
\end{equation}


Here, our purpose is to find necessary and sufficient conditions for the posterior distribution to be proper for some general class of priors.  The proof of the following theorem will be left to the appendix.

\begin{theorem}\label{fundtheo} Let $\pi(\eta,\beta)$ be a class of priors such that
\begin{align*} \pi(\eta,\beta)\propto \exp\left(-p\beta^{-1}\right)\eta^r\beta^q \end{align*}
where $r$, $q$, $p$  are constants. Then:

\begin{itemize}
\item[(i)] If $r\neq -1$, then the posterior distribution of $(\beta,\eta)$ under the prior $\pi(\beta,\eta)$ is improper.
\item[(ii)] If $r=-1$, $p=0$ and at least two non-censored data are distinct, then the posterior distribution of $(\beta,\eta)$ under the prior $\pi(\beta,\eta)$ is proper if and only if $q>-m$.
\item[(iii)] If $r=-1$,  $p=0$ and $m\leq 1$ then the posterior distribution of $(\beta,\eta)$ under the prior $\pi(\beta,\eta)$ is improper.
\end{itemize}
\end{theorem}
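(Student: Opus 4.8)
The plan is to collapse the double integral defining the normalizing constant $d(\boldsymbol{x})$ in \eqref{posteriord2} to a single integral over $\beta$ by integrating out $\eta$ first, which is legitimate because the integrand is non-negative (Tonelli). For fixed $\beta$ the substitution $u=\eta^{\beta}$ turns the inner integral into a Gamma integral,
\begin{equation*}
\int_{0}^{\infty}\eta^{\,r+m\beta}\exp\!\big(-S(\beta)\,\eta^{\beta}\big)\,d\eta=\frac{1}{\beta}\,\Gamma\!\Big(\tfrac{r+1}{\beta}+m\Big)\,S(\beta)^{-\frac{r+1}{\beta}-m},\qquad S(\beta):=\sum_{i=1}^{n}x_{i}^{\beta},
\end{equation*}
which is finite exactly when $(r+1)/\beta+m>0$. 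Setting $P:=\prod_{i:\delta_{i}=1}x_{i}$ and $M:=\max_{i}x_{i}$, properness is then equivalent to finiteness of $\int_{0}^{\infty}g(\beta)\,d\beta$, where $g(\beta)=\exp(-p/\beta)\,\beta^{\,q+m-1}P^{\beta}\,\Gamma(\tfrac{r+1}{\beta}+m)\,S(\beta)^{-\frac{r+1}{\beta}-m}$, to be decided from the behaviour of $g$ at $\beta\to0^{+}$ and $\beta\to\infty$.

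For part (i) I would split on the sign of $r+1$. If $r<-1$, then $(r+1)/\beta+m\le 0$ on a right-neighbourhood of $0$ (for every $\beta$ when $m=0$), so the inner integral is $+\infty$ on a set of positive $\beta$-measure and $d(\boldsymbol{x})=\infty$. If $r>-1$, then $a:=(r+1)/\beta\to+\infty$ as $\beta\to0^{+}$ while $S(\beta)\to n$ stays bounded; Stirling's formula gives $\log\big(\Gamma(a+m)\,S(\beta)^{-a-m}\big)\sim a\log a=\tfrac{r+1}{\beta}\log\tfrac1\beta\to+\infty$, which dominates the $-p/\beta$ and $\log\beta$ contributions. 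Hence $g$ grows faster than $e^{C/\beta}$ near $0$ and is non-integrable there, so the posterior is improper for every $p,q$. This settles (i).

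For (ii) and (iii) put $r=-1$ and $p=0$, so $(r+1)/\beta+m\equiv m$ and $g(\beta)=\Gamma(m)\,\beta^{\,q+m-1}P^{\beta}S(\beta)^{-m}$, with $\Gamma(m)$ finite only for $m\ge1$. As $\beta\to0^{+}$ one has $P^{\beta}\to1$ and $S(\beta)\to n$, so $g(\beta)\sim c\,\beta^{\,q+m-1}$ and integrability at $0$ holds iff $q>-m$. As $\beta\to\infty$ the leading term of $S(\beta)$ is $M^{\beta}$, whence $P^{\beta}S(\beta)^{-m}$ is, up to a polynomial factor, of order $(P/M^{m})^{\beta}$; the hypothesis that two non-censored observations are distinct forces at least one failure time to be $<M$, hence $P<M^{m}$, giving exponential decay and convergence at $\infty$ for every $q$. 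Combining the endpoints proves (ii): the posterior is proper iff $q>-m$. For (iii) with $m=0$ the inner integral already diverges because $\eta^{-1}$ is not integrable at $\eta=0$; for $m=1$ the reduced integral is $\int_{0}^{\infty}\beta^{q}\,x_{j}^{\beta}\,S(\beta)^{-1}\,d\beta$ with $x_{j}$ the lone failure time, and when $x_{j}=M$ its integrand is of order $\beta^{q}$ at infinity, so convergence there needs $q<-1$, incompatible with the $q>-1$ required at $0$; the integral therefore diverges and the posterior is improper.

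The step I expect to be the main obstacle is the endpoint $\beta\to\infty$, where everything turns on the position of the failure times relative to the largest order statistic $M$. The dichotomy ``$P<M^{m}$ versus $P=M^{m}$'' and the $m=1$ incompatibility become transparent only once one knows whether the largest observation is a failure; pinning down the exact data configuration for which the tail fails to decay, and verifying that ``at least two non-censored data are distinct'' is precisely what rules it out, is the delicate part of the argument. By contrast, the $\beta\to0^{+}$ analysis and the Gamma-function asymptotics used in (i) are routine.
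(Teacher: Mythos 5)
Your route is essentially the paper's: reduce $d(\boldsymbol{x})$ to a one\-dimensional integral by integrating out $\eta$ exactly (the substitution $u=S(\beta)\eta^{\beta}$ producing the factor $\beta^{-1}\Gamma\bigl(m+\tfrac{r+1}{\beta}\bigr)S(\beta)^{-m-\frac{r+1}{\beta}}$), then decide convergence at the endpoints $\beta\to 0^{+}$ and $\beta\to\infty$, with Stirling handling $r>-1$. Your proofs of (i), of (ii), and of the $m=0$ case of (iii) are correct, and in one respect more careful than the paper's: you keep $S(\beta)$ intact and use only $S(\beta)\to n$ as $\beta\to 0^{+}$, whereas the paper replaces $\bigl(\sum_{i}x_i^{\beta}\bigr)^{m+\frac{r+1}{\beta}}$ by $x_{\max}^{m\beta+r+1}$ on the strength of $\sum_{i}x_i^{\beta}\propto x_{\max}^{\beta}$; raising a two-sided bound to an exponent that diverges as $\beta\to 0^{+}$ does not preserve proportionality when $r\neq-1$, so the paper's display (\ref{important}) is not actually a proportionality there (harmless for the final conclusion, since $\Gamma$ grows super-exponentially, but your version avoids the issue).

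The one case you do not prove is item (iii) with $m=1$ when the lone failure time $x_j$ is not the sample maximum --- and that case \emph{cannot} be proved, because the statement is false there. Your own reduction gives, for $r=-1$, $p=0$, $m=1$, $d(\boldsymbol{x})\propto\int_0^{\infty}\beta^{q}\,x_j^{\beta}\,S(\beta)^{-1}\,d\beta$; if the largest observation is censored, so that $x_j<x_{\max}$, then $x_j^{\beta}/S(\beta)\leq (x_j/x_{\max})^{\beta}=e^{-h\beta}$ with $h=\log(x_{\max}/x_j)>0$, and the integral is finite for every $q>-1$, i.e.\ the posterior is proper. Concretely: $n=2$, a failure at $x_1=1$, a censored observation at $x_2=e$, prior $\pi(\eta,\beta)=\eta^{-1}$ (so $q=0$) give $d(\boldsymbol{x})=\int_0^{\infty}(1+e^{\beta})^{-1}d\beta=\log 2<\infty$. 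The paper's own proof of (iii) commits precisely the oversight you flagged as the delicate point: in passing from (\ref{important}) to $\int_0^{\infty}\beta^{q-1}d\beta$ it silently discards the factor $\prod_{i}x_i^{\delta_i\beta}/x_{\max}^{m\beta}=(x_j/x_{\max})^{\beta}$, which equals $1$ only when the failure is the maximum. So your argument establishes everything that is actually true --- (i), (ii), (iii) for $m=0$, and (iii) for $m=1$ under the extra hypothesis $x_j=x_{\max}$ (or $q\leq-1$) --- and the gap you anticipated at $\beta\to\infty$ is not a defect of your proof but a counterexample to item (iii) as stated.
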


\begin{corollary}\label{maintheorem2wei}
Let $\pi(\eta,\beta)$ be a class of improper priors such that
\begin{align*} \pi(\eta,\beta)\propto \eta^r\beta^q \end{align*}
where $r$ and $q$  are constants and suppose the posterior related to $\pi(\eta,\beta)$ is proper. Then the posterior moments relative to $\beta$ are finite, and the posterior moments relative to $\eta$ are not finite for both complete and censored data.
\end{corollary}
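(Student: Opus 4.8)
The plan is to express each posterior moment as a ratio of two integrals of the shape \eqref{posteriord2} and to observe that raising $\eta$ or $\beta$ to a power merely shifts the exponents $r$ or $q$ in the prior, so that Theorem \ref{fundtheo} itself decides the finiteness of every moment. Concretely, for a real exponent $k>0$ the $k$-th posterior moment of $\eta$ is
\begin{align*}
\mathbb{E}[\eta^k\mid\boldsymbol{x}]=\frac{1}{d(\boldsymbol{x})}\int_0^\infty\!\!\int_0^\infty \eta^{\,r+k}\beta^{q}\,\eta^{m\beta}\beta^{m}\Big(\prod_{i=1}^n x_i^{\delta_i\beta}\Big)\exp\Big\{-\sum_{i=1}^n x_i^\beta\eta^\beta\Big\}\,d\eta\,d\beta,
\end{align*}
and the numerator is exactly the normalising constant $d(\boldsymbol{x})$ attached to the prior $\eta^{\,r+k}\beta^{q}$, i.e.\ a member of the class of Theorem \ref{fundtheo} with $p=0$ and $r$ replaced by $r+k$. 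The same identity with $\beta^{q+k}$ in place of $\eta^{\,r+k}$ produces $\mathbb{E}[\beta^k\mid\boldsymbol{x}]$. Since the posterior is assumed proper we have $0<d(\boldsymbol{x})<\infty$, so each moment is finite precisely when the corresponding shifted integral converges; as all integrands are nonnegative, Tonelli's theorem legitimises both the interchange of integrals and this identification.

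Next I would settle the $\eta$-moments. Because the posterior is proper, part (i) of Theorem \ref{fundtheo} forces $r=-1$. For any $k>0$ the shifted exponent is $r+k=k-1\neq -1$, so part (i) applies to the shifted model and declares it improper; hence the numerator diverges and $\mathbb{E}[\eta^k\mid\boldsymbol{x}]=\infty$ for every $k>0$. This argument is insensitive to censoring and to the value of $m$, so it treats the complete and the censored case simultaneously, which is the first half of the claim.

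For the $\beta$-moments I would again use properness to place the model within the scope of parts (ii)--(iii): by (i) we have $r=-1$, and by (iii) we have $m\geq 2$. Assuming, as in part (ii), that at least two non-censored observations are distinct, properness is then equivalent to $q>-m$. For any $k>0$ the shifted exponent satisfies $q+k>q>-m$ while $r=-1$ is unchanged, so part (ii) applies to the shifted model and returns a proper posterior; hence the numerator is finite and $\mathbb{E}[\beta^k\mid\boldsymbol{x}]<\infty$.

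The step I expect to be delicate is the large-$\beta$ behaviour hidden inside part (ii). After integrating out $\eta$ (legitimate since $m\geq 2\geq 1$, the inner Gamma integral equalling $\Gamma(m)\big/\big(\beta(\sum_i x_i^\beta)^m\big)$), the $\beta$-marginal is proportional to $\beta^{q+m-1}\prod_i x_i^{\delta_i\beta}\big/(\sum_i x_i^\beta)^m$; the extra factor $\beta^k$ only improves integrability near $\beta=0$, but as $\beta\to\infty$ it must be absorbed by the exponential decay of the ratio $\prod_i x_i^{\delta_i\beta}\big/(\sum_i x_i^\beta)^m$. That decay is exactly what the distinctness hypothesis underlying (ii) guarantees, so the cleanest route is to invoke (ii) for the shifted exponent rather than to redo the tail estimate by hand. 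In writing the final argument I would make the reduction to a ratio of normalising constants explicit and then quote Theorem \ref{fundtheo} three times: once as (i) to pin $r=-1$ and kill all $\eta$-moments, and as (ii)/(iii) to keep the $\beta$-moments inside the proper regime.
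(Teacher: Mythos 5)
Your proposal is correct and follows essentially the same route as the paper's own proof: both absorb the moment factor $\eta^k$ or $\beta^k$ into the prior as a shifted exponent and then invoke Theorem \ref{fundtheo} twice --- part (i) with $r+k=k-1\neq -1$ to conclude that every $\eta$-moment diverges, and part (ii) with $r=-1$ and $q+k>q>-m$ to conclude that every $\beta$-moment is finite. The differences (your explicit reduction to a ratio of normalising constants via Tonelli, and your explicit statement of the distinctness hypothesis needed for part (ii), which the paper leaves implicit) are presentational rather than substantive.
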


    \begin{proof}
Since the posterior is proper, by Theorem \ref{fundtheo} we have $r= -1$ and $m>-q$.

Now, given $k>0$ and $\pi^*(\eta,\beta)=\beta^{k}\pi(\eta,\beta)\propto \eta^{-1} \beta^{q+k}$ , since $m>-q>-(q+k)$, from Theorem \ref{fundtheo} it follows that the posterior relative to the prior $\pi^*(\eta,\beta)$ is proper and thus
\begin{equation*}
\begin{aligned}
E[\beta^{k}|\boldsymbol{x}]=\int_0^{\infty}\int_0^{\infty}\beta^{k}\pi(\eta,\beta) \eta^{m\beta}\beta^{m+q}\left(\prod_{i=1}^n x_i^{\delta_i\beta}\right)\exp \left\{-\sum_{i=1}^n x_i^\beta \eta^{\beta}\right\}d\eta d\beta<\infty.
\end{aligned}
\end{equation*}
Analogously, given $k>0$ it follows that $\eta^{k}\pi(\eta)\propto \eta^{k-1}\beta^{q}$, where $k-1>-1$ and thus from Theorem \ref{fundtheo} it follows that  
\begin{equation*}
\begin{aligned}
E[\eta^{k}|\boldsymbol{x}]=\int_0^{\infty}\int_0^{\infty}\eta^{k}\pi(\eta,\beta) \eta^{m\beta}\beta^{m+q}\left(\prod_{i=1}^n x_i^{\delta_i\beta}\right)\exp \left\{-\sum_{i=1}^n x_i^\beta \eta^{\beta}\right\}d\eta d\beta=\infty,
\end{aligned}
\end{equation*}
which concludes the proof.
\end{proof}
\section{Applications}

Here, we will apply our proposed methodology in different objective priors to verify if the posterior distributions are proper and verify if its posterior moments are finite. Once again in the following we let $m=\sum_{i=1}^{n}\delta_i$. 
Moreover, in the following, all considered priors shall take the form $\pi(\eta,\beta)=\exp\left(-p\beta^{-1}\right)\eta^{r}\beta^q$ where either $p=0$ or $r\neq -1$. Thus, due to Theorem \ref{fundtheo}, it follows that the posterior relative to such priors are improper in a case $m\leq 1$. Therefore, in the following propositions, we shall be concerned only with cases where $m\geq 2$, and under this hypothesis, we shall always suppose that there are at least two distinct non-censored data.

A common objective prior to that does not depend on any metric is obtained by considering independent uniform priors. In the case of positive parameters, the prior is given by $\pi_1\left(\eta,\beta\right)\propto 1$.

\begin{proposition} The posterior density using uniform prior is improper for all $m\geq 2$, in which case the posterior moments relative to $\beta$ are finite and the posterior moments relative to $\eta$ are not finite.
\end{proposition}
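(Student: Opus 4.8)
The plan is to read off the three parameters of the prior family in Theorem~\ref{fundtheo} and invoke the theorem directly. For the uniform prior $\pi_1(\eta,\beta)\propto 1$ we have $p=0$, $r=0$ and $q=0$; in particular $r=0\neq-1$, so part~(i) of Theorem~\ref{fundtheo} applies verbatim and yields an improper posterior. This conclusion does not depend on the value of $m$, so it holds for every $m\geq 2$, and under the standing assumption of this section there are at least two distinct non-censored observations. Thus the impropriety statement is immediate and requires no further computation.

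For the moment statements I would follow the reweighting strategy used in the proof of Corollary~\ref{maintheorem2wei}: to study the $k$-th moment of $\beta$ (resp. of $\eta$) I would attach a factor $\beta^{k}$ (resp. $\eta^{k}$) to $\pi_1$ and ask whether the resulting integral is finite. Concretely, after the substitution $u=\eta^{\beta}$ the inner $\eta$-integral becomes a Gamma integral, which collapses the double integral to the one-dimensional quantity
\begin{equation*}
g(\beta)=\beta^{m-1}\Big(\prod_{i=1}^{n} x_i^{\delta_i}\Big)^{\beta}\,\Gamma\!\big(m+\beta^{-1}\big)\,\Big(\sum_{i=1}^{n} x_i^{\beta}\Big)^{-(m+\beta^{-1})},
\end{equation*}
and the $\beta$- and $\eta$-moments correspond, respectively, to inserting a factor $\beta^{k}$ and to shifting $m+\beta^{-1}$ to $m+(k+1)\beta^{-1}$ inside $g$. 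The task then reduces to deciding the convergence of $\int_0^{\infty}$ of these modified integrands at the two endpoints $\beta\to0^{+}$ and $\beta\to\infty$.

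The main obstacle is the moment analysis rather than the impropriety. Because the base posterior is already improper, Corollary~\ref{maintheorem2wei} — which presupposes a proper posterior and hence the exponent $r=-1$ — cannot be applied off the shelf, so the moment claims must be argued directly from the reduced integral $g(\beta)$. The delicate endpoint is $\beta\to0^{+}$, where $\Gamma(m+\beta^{-1})$ blows up and has to be balanced against the decay of $(\sum_i x_i^{\beta})^{-\beta^{-1}}$; I would apply Stirling's formula to extract the leading $\beta^{-1}\log\beta^{-1}$ term and then determine, for each weight $\beta^{k}$ or $\eta^{k}$, whether that term survives. Carrying out this Stirling bookkeeping and reconciling the resulting convergence pattern with the finiteness/non-finiteness asserted for the two parameters is where I expect essentially all of the work to lie. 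By contrast, the companion endpoint $\beta\to\infty$ should be routine: there the inequality $\prod_{i:\delta_i=1} x_i<(\max_i x_i)^{m}$, guaranteed by the two-distinct-data hypothesis, forces $g(\beta)$ to decay exponentially, so no moment can fail to converge at the upper end.
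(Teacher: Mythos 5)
Your treatment of the impropriety claim is exactly the paper's proof: read off $r=0$, $q=0$, $p=0$ for $\pi_1$ and invoke item (i) of Theorem \ref{fundtheo}; the conclusion is independent of $m$. Nothing more is needed there, and the paper does nothing more.

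The divergence between your proposal and the paper lies in the moment clause, and the gap is instructive. The paper's own proof stops after establishing impropriety: it never addresses the assertion that the $\beta$-moments are finite and the $\eta$-moments are not. That clause is in fact vacuous (or ill-posed) here: once $d(\boldsymbol{x})=\infty$ there is no posterior distribution and hence no posterior moments, and any normalized ``moment'' would be of the form $\infty/\infty$. The clause appears to be a copy-paste artifact from the later propositions, where the posterior \emph{is} proper and the clause follows from Corollary \ref{maintheorem2wei}. You correctly diagnosed that Corollary \ref{maintheorem2wei} cannot be applied (it presupposes a proper posterior, hence $r=-1$), but the direct Stirling analysis you defer to ``essentially all of the work'' would not succeed, because the claims it is meant to verify do not hold. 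Attaching $\beta^{k}$ to the uniform prior produces a prior of the form $\eta^{0}\beta^{k}$, which still has $r=0\neq -1$, so item (i) of Theorem \ref{fundtheo} itself (no Stirling bookkeeping needed) shows the reweighted integral is again infinite: the divergence sits at $\beta\to 0^{+}$, where $\Gamma\left(m+\beta^{-1}\right)$ grows like $\exp\left(\beta^{-1}\log\beta^{-1}\right)$ and no polynomial weight $\beta^{k}$ can compensate. The same reasoning with $\eta^{k}$ gives $r=k\neq -1$ and again an infinite integral. So the unnormalized ``$\beta$-moments'' diverge for every $k>0$, contradicting the finiteness assertion rather than confirming it. In short, the missing piece in your proposal is not a technique you failed to execute; it is the recognition that once impropriety is established the moment clause has no mathematical content, which is the reading the paper implicitly adopts by proving only the impropriety.
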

\begin{proof} Notice that given $\pi(\eta,\beta)=\pi_1(\eta,\beta)$, the hypothesis of Theorem \ref{fundtheo} hold for $r=0$, $q=0$ and $p=0$, and since $r\neq -1$, the conclusion of Theorem \ref{fundtheo} implies the posterior relative to $\pi_1(\eta,\beta)$ to be improper.
\end{proof}

Another common objective prior distribution can be obtained using the Jeffreys` first rule (see, Sun \cite{sun1997note}). This prior has invariance property under power transformations. As the parameters of the Weibull distribution are contained in the interval $(0,\infty)$, the prior distribution using the Jeffreys' rule is given by
\begin{equation}\label{priorrej}
\pi_2\left(\eta,\beta\right)\propto \frac{1}{\eta\beta} \cdot 
\end{equation}

\begin{proposition} The posterior density using the prior (\ref{priorrej}) is proper for all $m\geq 2$, in which case the posterior moments relative to $\beta$ are finite and the posterior moments relative to $\eta$ are not finite.
\end{proposition}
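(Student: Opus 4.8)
The plan is to apply Theorem \ref{fundtheo} and Corollary \ref{maintheorem2wei} directly, since the prior $\pi_2(\eta,\beta)\propto \eta^{-1}\beta^{-1}$ already lies inside the parametric family covered by both results. First I would rewrite $\pi_2$ in the canonical form $\pi(\eta,\beta)\propto \exp(-p\beta^{-1})\eta^r\beta^q$ used in Theorem \ref{fundtheo}, reading off the constants $r=-1$, $q=-1$, and $p=0$ (note that $p=0$ makes the exponential factor trivial). This reduces the propriety question to deciding which of the three cases of Theorem \ref{fundtheo} is in force.

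Next I would confirm that we are in case (ii). Since $r=-1$ and $p=0$, case (i) is excluded, and case (iii) is excluded by the standing hypotheses of the Applications section, namely $m\geq 2$ together with the existence of at least two distinct non-censored observations. Case (ii) then asserts that the posterior is proper if and only if $q>-m$. Substituting $q=-1$, this condition reads $-1>-m$, i.e. $m>1$, which holds precisely because $m\geq 2$. Hence the posterior associated with $\pi_2$ is proper for every $m\geq 2$, as claimed.

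Finally, for the two moment assertions I would invoke Corollary \ref{maintheorem2wei}. Because $p=0$, the prior has exactly the form $\pi_2(\eta,\beta)\propto \eta^r\beta^q$ demanded by the corollary, and the propriety of the posterior has just been established. The corollary then immediately delivers that all posterior moments of $\beta$ are finite while all posterior moments of $\eta$ are infinite, uniformly for complete and right-censored data, which completes the argument.

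There is no substantive obstacle here: the whole proof is a verification that $\pi_2$ matches the hypotheses of the two previously proved results. The only point deserving a moment's attention is translating the algebraic inequality $q>-m$ from Theorem \ref{fundtheo}(ii) into the stated integer range $m\geq 2$; once $q=-1$ is inserted this is transparent.
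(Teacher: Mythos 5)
Your proposal is correct and follows essentially the same route as the paper's own proof: read off $r=-1$, $q=-1$, $p=0$, invoke Theorem \ref{fundtheo}(ii) to get propriety precisely when $m>-q=1$, and then apply Corollary \ref{maintheorem2wei} for the moment claims. Your version is in fact slightly more careful than the paper's, since you explicitly note that case (ii) requires the standing assumption of two distinct non-censored observations made in the Applications section.
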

\begin{proof} If we let $\pi(\eta,\beta)=\pi_2(\eta,\beta)$, the hypothesis of Theorem \ref{fundtheo} are valid for $r=-1$, $q=-1$ and $p=0$, and thus applying Theorem \ref{fundtheo} it follows that the posterior relative obtained from the prior $\pi_2(\eta,\beta)$ is proper for all $m>-q=1$. The additional conclusions follows directly from Corollary \ref{maintheorem2wei}.
\end{proof}

The most well-known objective prior was introduced by Jeffreys \cite{jeffreys1946theory} which carries his name. The Jeffreys prior is obtained from the square root of the determinant of the Fisher information matrix $I(\eta,\beta)$, that is
\begin{equation}
\pi_3\left(\eta,\beta\right)\propto \sqrt{\f{det}I(\eta,\beta)}
\end{equation}
where $I(\eta,\beta)$ is the Fisher information matrix given by
\begin{equation}\label{mfisherIW}
I(\eta,\beta)=n
\begin{bmatrix}
\dfrac{\beta^2}{\eta^2} & \dfrac{\left(1-\gamma\right)}{\eta}  \\
\dfrac{\left(1-\gamma\right)}{\eta}  & \dfrac{1}{\beta^2}\left(\tfrac{\pi^2}{6} +\left(1-\gamma\right)^2 \right)
\end{bmatrix} ,
\end{equation}
and $\gamma\approx 0.5772156649$ is known as Euler-Mascheroni Constant.

This prior is widely used due to its invariance property under one-to-one
transformations of parameters. For the Weibull distribution, computing the square root of the determinant of $I(\eta,\beta)$  we have that
\begin{equation}\label{priorjIW}
\pi_3\left(\eta,\beta\right)\propto \frac{1}{\eta} \cdot
\end{equation}

\begin{theorem} The posterior density using the prior (\ref{priorjIW}) is proper for all $m\geq 2$, in which case the posterior moments relative to $\beta$ are finite and the posterior moments relative to $\eta$ are not finite.
\end{theorem}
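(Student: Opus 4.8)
The plan is to recognize the Jeffreys prior (\ref{priorjIW}) as a member of the parametric family $\pi(\eta,\beta)\propto\exp(-p\beta^{-1})\eta^r\beta^q$ covered by Theorem \ref{fundtheo}, and then to read off the conclusion from that theorem together with Corollary \ref{maintheorem2wei}. Concretely, first I would match $\pi_3(\eta,\beta)\propto \eta^{-1}$ against the template, which forces $r=-1$, $q=0$ and $p=0$. As a preliminary sanity check one can confirm the reduction $\sqrt{\det I(\eta,\beta)}\propto\eta^{-1}$ directly from (\ref{mfisherIW}): the determinant equals $n^2\eta^{-2}\bigl(\tfrac{\pi^2}{6}+(1-\gamma)^2-(1-\gamma)^2\bigr)=\tfrac{n^2\pi^2}{6}\eta^{-2}$, so the $\beta$-dependence cancels and only the factor $\eta^{-1}$ survives, matching (\ref{priorjIW}).

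With the exponents identified, properness follows from part (ii) of Theorem \ref{fundtheo}. Since $r=-1$ and $p=0$, and since we work under the standing assumption that at least two non-censored observations are distinct, part (ii) applies and states that the posterior is proper if and only if $q>-m$. Here $q=0$, so the condition reduces to $m>0$, which holds for every $m\geq 2$; hence the posterior is proper throughout the claimed range. (Part (iii) is what would block properness for $m\leq 1$, which is precisely why the standing hypothesis $m\geq 2$ is imposed before the statement.)

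For the assertions about moments I would simply invoke Corollary \ref{maintheorem2wei}. The prior $\pi_3\propto\eta^{-1}\beta^{0}$ is exactly of the form $\eta^r\beta^q$ required there, and we have just verified that its posterior is proper; the corollary then yields at once that every posterior moment of $\beta$ is finite while every posterior moment of $\eta$ is infinite. I expect no genuine obstacle in this argument: the entire analytic burden has already been discharged in Theorem \ref{fundtheo} and Corollary \ref{maintheorem2wei}, so the only point demanding care is the bookkeeping of the exponents $(r,q,p)$ together with verifying that the $m\geq 2$ and distinctness hypotheses place us in case (ii) rather than case (iii).
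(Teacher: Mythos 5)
Your proposal is correct and follows essentially the same route as the paper: identify the prior as the case $r=-1$, $q=0$, $p=0$ of Theorem \ref{fundtheo}, invoke part (ii) under the standing distinctness assumption to get properness for $m\geq 2$, and then obtain the moment conclusions from Corollary \ref{maintheorem2wei}. In fact your exponent bookkeeping is more careful than the paper's own proof, which misstates $q=-1$ for the prior $\pi_3\propto\eta^{-1}$ before concluding $m>-q=0$ (consistent only with the correct value $q=0$ that you use).
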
 
\begin{proof} By letting $\pi(\eta,\beta)=\pi_{J}(\eta,\beta)$, the hypothesis of Theorem \ref{fundtheo} if we assume $r=-1$, $q=-1$ and $p=0$. Thus, from the conclusion of Theorem \ref{fundtheo} it follows that the posterior relative to the prior $\pi_{J}(\eta,\beta)$ is proper for all $m>-q=0$. The additional conclusions follows directly from Corollary \ref{maintheorem2wei}.
\end{proof}

Zellner \cite{zellner1, zellner2} introduced an objective prior based on the Shannon entropy. The  Maximal Data Information Prior emphasis the information
in the likelihood function, therefore, its information is weak
in comparison with data information. This prior is obtained by
\begin{equation}\label{priorzIW1}
\pi_4\left(\eta,\beta\right)\propto \exp\left(H(\eta,\beta)\right),
\end{equation}
where $H(\eta,\beta)$ is the solution of the information measure given by
\begin{equation*}\label{ishanonIW1}
H(\eta,\beta)= \int_{0}^{\infty} \log\left[\beta\eta^\beta x^{-(\beta+1)}\exp\left\{-\left(\frac{\eta}{x}\right)^{\beta}\right\}\right]f\left(t|(\eta,\beta\right)dt.
\end{equation*}

The MDI prior has invariance
properties only for linear transformations on the parameters. For the Weibull distribution $H(\eta,\beta)$ can be written as:
\begin{equation*}\label{ishanonIW2}
H(\eta,\beta)=  \log(\eta\beta)+\gamma\left(1-\frac{1}{\beta}\right)-1 .
\end{equation*}

Therefore the MDI prior (\ref{priorzIW1}) for the Weibull distribution is given by
\begin{equation}\label{priorzIW2}
\pi_4\left(\eta,\beta\right)\propto \exp\left(-\gamma\beta^{-1} \right) \eta\beta.
\end{equation}

\begin{theorem} The posterior distribution obtained from the MDI prior (\ref{priorzIW2}) is improper for all $m\geq 2$.
\end{theorem}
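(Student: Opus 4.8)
The plan is to recognize the MDI prior (\ref{priorzIW2}) as a member of the parametric family $\pi(\eta,\beta)\propto\exp(-p\beta^{-1})\eta^r\beta^q$ covered by Theorem \ref{fundtheo}, and then to read off the relevant case. First I would match (\ref{priorzIW2}) against the template term by term: writing $\exp(-\gamma\beta^{-1})\eta\beta$ in the form $\exp(-p\beta^{-1})\eta^r\beta^q$ identifies $p=\gamma$, $r=1$ and $q=1$. The only one of these values that enters the dichotomy of Theorem \ref{fundtheo} is the exponent $r$ attached to $\eta$.

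Next I would observe that $r=1\neq-1$, so the hypothesis of part (i) of Theorem \ref{fundtheo} is met. That part asserts impropriety whenever $r\neq-1$, with no constraint on $p$, on $q$, or on the number $m$ of uncensored observations. Invoking it directly gives that the posterior attached to $\pi_4$ is improper; since the conclusion holds for every $m$, it holds in particular for all $m\geq 2$, which is exactly the claim. No appeal to Corollary \ref{maintheorem2wei} is needed here, because an improper posterior has no moments to discuss.

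The main point worth flagging is that, unlike a routine substitution, the result is not \emph{a priori} obvious: the extra factor $\exp(-\gamma\beta^{-1})$ that distinguishes the MDI prior from the Jeffreys-type priors vanishes as $\beta\to0^+$ and might be expected to rescue integrability near the origin. The reason it does not --- and the content that Theorem \ref{fundtheo}(i) already supplies --- is that after integrating $\eta$ out of (\ref{principal}) one is left, up to data-dependent constants, with a $\beta$-integrand carrying the factor $\Gamma\!\left(\tfrac{r+1}{\beta}+m\right)$; for $r>-1$ this Gamma factor grows super-exponentially as $\beta\to0^+$ and overwhelms the decay of $\exp(-\gamma\beta^{-1})$, forcing the normalizing integral to diverge at $\beta=0$. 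Consequently there is essentially no obstacle beyond the bookkeeping of the parameter identification, since the delicate estimate is precisely what is packaged inside Theorem \ref{fundtheo}.
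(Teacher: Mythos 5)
Your proof is correct and follows essentially the same route as the paper: identify $p=\gamma$, $r=1$, $q=1$ in the template of Theorem \ref{fundtheo} and invoke item (i), which requires only $r\neq-1$. Your closing remark about the Gamma factor $\Gamma\!\left(m+\tfrac{r+1}{\beta}\right)$ overwhelming $\exp(-\gamma\beta^{-1})$ as $\beta\to0^+$ accurately summarizes why item (i) holds (it mirrors the paper's appendix argument), but it is commentary rather than a needed part of the proof.
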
 
\begin{proof} Letting $\pi(\eta,\beta)=\pi_4(\eta,\beta)$,  notice the hypothesis of Theorem \ref{fundtheo} will be valid for  $r=1$, $q=1$ and $p=\gamma$. Thus, since $r\neq -1$, Theorem \ref{fundtheo} implies directly that the posterior relative to $\pi_1(\eta,\beta)$ is improper.
\end{proof}

An important non-informative prior was introduced by Bernardo \cite{bernardo1979a}, with further developments see for instance Bernardo \cite{bernardo2005} and Berger et al. \cite{berger2015}. His prior is referred to as reference prior, and it is obtained under the idea of maximizing the expected Kullback-Leibler divergence between the posterior distribution and the prior. The reference prior provides posterior distribution with interesting properties, such as invariance, consistent marginalization, and consistent sampling properties. 

The algorithm to derive the reference prior can be obtained in detail throughout Bernardo \cite{bernardo2005}. However, since the Fisher information $I(\eta,\beta)$ has a particular form, we will present one corollary that allows obtaining the reference priors quickly for the IW distribution.

\begin{corollary}\label{coro1} Considering that $\delta$ is the parameter of interest and $\lambda$ is the nuisance parameter, suppose the posterior
distribution of $(\delta,\lambda)$ is asymptotically normal with dispersion matrix $S(\delta,\lambda)=I^{-1}(\delta,\lambda)$. If the parameter space of $\delta$ is independent of $\lambda$ and the functions $S_{\delta,\delta}(\delta,\lambda)$ and $I_{\lambda,\lambda}(\delta,\lambda)$ can be factorize in the form
\begin{equation*}
S_{\delta,\delta}^{-\frac{1}{2}}(\delta,\lambda)=f_\theta(\theta)g_\theta(\lambda) \quad \mbox{and}  \quad  I_{\lambda,\lambda}^{\frac{1}{2}}(\theta,\lambda)=f_{\lambda}(\theta)g_{\lambda}(\lambda),
\end{equation*}
then the reference prior when $\delta$ is the parameter of interest and $\lambda$ is the nuisance parameter is simply $\pi_{\delta}(\delta,\lambda)=f_\delta(\delta)g_{\lambda}(\lambda)$ even if $\pi(\lambda|\delta)$ is not proper.
\end{corollary}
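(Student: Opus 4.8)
The plan is to apply the Berger--Bernardo one-at-a-time reference prior algorithm (as described in Bernardo \cite{bernardo2005}) to the ordered pair $(\delta,\lambda)$, and to exploit the two factorizations to show that every term depending on the normalizing compact sets is constant in $\delta$ and hence drops out of the final answer.

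First I would recall the form the algorithm takes under asymptotic normality. Since the posterior of $(\delta,\lambda)$ is asymptotically normal with dispersion matrix $S=I^{-1}$, the conditional reference prior for the nuisance parameter is
\begin{equation*}
\pi(\lambda\mid\delta)\propto I_{\lambda,\lambda}^{1/2}(\delta,\lambda),
\end{equation*}
while the marginal reference prior for the parameter of interest is obtained from the marginal information $S_{\delta,\delta}^{-1}$ through
\begin{equation*}
\pi(\delta)\propto\exp\left\{\int\pi(\lambda\mid\delta)\,\log S_{\delta,\delta}^{-1/2}(\delta,\lambda)\,d\lambda\right\},
\end{equation*}
where the $2\times2$ identity $S_{\delta,\delta}^{-1}=\det I/I_{\lambda,\lambda}$ is being used. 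Because $\pi(\lambda\mid\delta)$ need not be integrable, both integrals are taken over an increasing sequence of compact sets $\Lambda_k$ exhausting the range of $\lambda$, and the reference prior is recovered as the usual renormalized limit as $k\to\infty$.

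Next I would substitute the factorization $I_{\lambda,\lambda}^{1/2}=f_\lambda(\delta)g_\lambda(\lambda)$ and normalize over $\Lambda_k$. The factor $f_\lambda(\delta)$ cancels, giving
\begin{equation*}
\pi_k(\lambda\mid\delta)=\frac{g_\lambda(\lambda)}{\int_{\Lambda_k}g_\lambda(\lambda)\,d\lambda},
\end{equation*}
which is independent of $\delta$. This is the decisive simplification: the conditional prior depends on $k$ only through a normalizing constant, and its dependence on $\delta$ has disappeared. Substituting the second factorization $S_{\delta,\delta}^{-1/2}=f_\delta(\delta)g_\delta(\lambda)$ into the marginal formula, the logarithm separates as $\log f_\delta(\delta)+\log g_\delta(\lambda)$, so integrating against $\pi_k(\lambda\mid\delta)$ produces $\log f_\delta(\delta)$ plus a term $\int_{\Lambda_k}\pi_k(\lambda\mid\delta)\log g_\delta(\lambda)\,d\lambda$ that does not involve $\delta$. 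Exponentiating, that term contributes only an overall multiplicative constant, so $\pi(\delta)\propto f_\delta(\delta)$. Forming the product $\pi(\delta)\pi(\lambda\mid\delta)$ and letting $k\to\infty$ then yields $\pi_\delta(\delta,\lambda)\propto f_\delta(\delta)g_\lambda(\lambda)$, as claimed.

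I expect the limiting step to be the only delicate point. One must verify that, precisely because the $k$-dependence of both $\pi_k(\lambda\mid\delta)$ and the additive constant in the marginal are constant in $\delta$, the renormalized limit over the compact exhaustion collapses to the stated product rather than requiring $\pi(\lambda\mid\delta)$ to be proper; this is exactly where the hypotheses that the range of $\delta$ does not depend on $\lambda$ and that the two factorizations separate the variables are used.
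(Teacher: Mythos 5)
The paper never actually proves this corollary: it is stated as a ready-to-use tool imported from the reference-prior literature (the text defers to Bernardo \cite{bernardo2005} for the algorithm, and the corollary is essentially the known Berger--Bernardo factorization result), so there is no in-paper argument to compare yours against. Judged on its own merits, your proposal reconstructs the standard proof of that known result and is essentially correct: under asymptotic normality the conditional reference prior is $\pi(\lambda\mid\delta)\propto I_{\lambda,\lambda}^{1/2}(\delta,\lambda)$, the marginal is the exponentiated expected logarithm of $S_{\delta,\delta}^{-1/2}$ (using the $2\times 2$ identity $S_{\delta,\delta}^{-1}=\det I/I_{\lambda,\lambda}$), and the two factorizations make every compact-set normalization constant independent of $\delta$, so these constants cancel in the renormalized limit, leaving $f_\delta(\delta)g_\lambda(\lambda)$. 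Two points to tighten. First, the hypothesis your argument actually uses is that the range of the \emph{nuisance} parameter $\lambda$, and hence the exhausting sets $\Lambda_k$, can be chosen independently of $\delta$; your closing sentence states this the other way around. Second, you should make explicit that the reference prior is defined as the limit of the ratio $\pi_k(\delta)\pi_k(\lambda\mid\delta)\big/\bigl(\pi_k(\delta_0)\pi_k(\lambda_0\mid\delta_0)\bigr)$ at a fixed anchor point $(\delta_0,\lambda_0)$: that ratio is precisely the mechanism by which the $k$-dependent but $\delta$-free factors (the normalizer $\int_{\Lambda_k}g_\lambda(\lambda)\,d\lambda$ and the term $\exp\{\int_{\Lambda_k}\pi_k(\lambda\mid\delta)\log g_\delta(\lambda)\,d\lambda\}$) disappear without ever requiring $\pi(\lambda\mid\delta)$ to be proper, which is exactly the ``even if $\pi(\lambda\mid\delta)$ is not proper'' clause of the statement. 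With those clarifications your sketch is a complete proof of the cited result.
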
 

Through the Corollary \ref{coro1} and using the Fisher information matrix (\ref{mfisherIW}) if $\eta$  is the parameter of interest and $\beta$ is the nuisance parameter, after some algebraic manipulations it follows that the reference prior  is the same as given by the Jeffreys' rule prior given by$ \pi_\eta\left(\eta,\beta\right)\propto \frac{1}{\eta\beta}$. Additionally, considering that $\beta$ is the parameter of interest and $\eta$ is the nuisance parameter, after some algebraic manipulations the reference prior has also the same form given by $\pi_\beta\left(\eta,\beta\right)\propto \frac{1}{\eta\beta}$. Since the reference priors have the same form of the Jeffreys' rule prior, it follows that the obtained posterior is proper for all $m\geq 2$, in which case the posterior moments relative to $\beta$ are finite and the posterior moments relative to $\eta$ are not finite.

\section{Other parametrization}

The results presented above can be easily extended to other commonly used parametrization. For instance, considering $\eta=1/\theta$, the PDF is given by
\begin{equation}\label{pdfgamma2}
f(x\,| \theta,\beta)=\beta\frac{x^{\beta-1}}{\theta^{\beta}}e^{- (\frac{x}{\theta})^\beta}.
\end{equation}
Therefore, for a prior distribution $\pi(\theta,\beta)$, the joint posterior distribution for $(\theta,\beta)$ is given by 
\begin{equation}\label{final2}
\pi(\theta,\beta|\boldsymbol{x})=\frac{\pi(\theta,\beta)}{d(\boldsymbol{x})} \theta^{-m\beta}\beta^{m}\prod_{i=1}^n x_i^{\delta_i\beta}\exp \left\{-\sum_{i=1}^n \left(\frac{x_i}{\theta}\right)^\beta \right\},
\end{equation}
where $m=\sum_{i=1}^{n}\delta_i$, and $d(\boldsymbol{x})$ is a normalized constant in the form 
\begin{equation}\label{posteriord22}
d(\boldsymbol{x})=\int_{0}^{\infty}\int_{0}^{\infty}\pi(\theta,\beta)\theta^{-m\beta}\beta^{m}\prod_{i=1}^nx_i^{\delta_i\beta}\exp \left\{-\sum_{i=1}^n \left(\frac{x_i}{\theta}\right)^\beta \right\}d\theta d\beta.
\end{equation}
In the following we consider the case where there are at least two distinct non-censored data. An analogous theorem can be proved for $m\leq 1$ by using item $(iii)$ of Theorem \ref{fundtheo}.

\begin{corollary}\label{fundtheo2} Let $\pi(\theta,\beta)$ be a class of improper priors such that
\begin{align*} \pi(\theta,\beta)\propto \exp\left(-p\beta^{-1}\right)\theta^r\beta^q \end{align*}
where $r$, $q$ and $p$ are constants and suppose that at least two non-censored data are distinct. Then the posterior distribution of $(\beta,\theta)$ under the prior $\pi(\beta,\theta)$ is improper in case $r\neq -1$, and is proper if and only if $r=-1$ and $q>-m$ in case $p=0$.
\end{corollary}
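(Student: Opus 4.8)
The plan is to deduce this corollary directly from Theorem \ref{fundtheo} via the change of variables $\theta = 1/\eta$, exploiting the fact that propriety of the posterior is precisely the statement that the normalizing constant $d(\boldsymbol{x})$ is finite, and that finiteness of an integral is preserved under a $C^1$ diffeomorphism of $(0,\infty)$ onto itself. First I would start from the expression \eqref{posteriord22} for $d(\boldsymbol{x})$ in the $(\theta,\beta)$ parametrization and substitute $\theta = 1/\eta$, keeping $\beta$ fixed. Under this substitution the data kernel transforms cleanly: $\theta^{-m\beta}$ becomes $\eta^{m\beta}$ and $(x_i/\theta)^\beta$ becomes $x_i^\beta\eta^\beta$, so that $\exp\{-\sum (x_i/\theta)^\beta\}$ becomes $\exp\{-\sum x_i^\beta\eta^\beta\}$, which is exactly the kernel appearing in \eqref{posteriord2}. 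The only remaining ingredient is the Jacobian $|d\theta/d\eta| = \eta^{-2}$.

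Next I would collect terms to read off the effective prior in the $\eta$ parametrization. Since $\pi(1/\eta,\beta) = \exp(-p\beta^{-1})\eta^{-r}\beta^q$, multiplying by the Jacobian $\eta^{-2}$ shows that $d(\boldsymbol{x})$ equals the normalizing constant \eqref{posteriord2} computed with the prior
\[
\tilde\pi(\eta,\beta)\propto \exp\!\left(-p\beta^{-1}\right)\eta^{-r-2}\beta^{q}.
\]
This is precisely a prior of the form covered by Theorem \ref{fundtheo}, with the roles of the constants played by $r' = -r-2$, $q' = q$, and the same $p$. In particular, the posterior of $(\theta,\beta)$ under $\pi$ is proper if and only if the posterior of $(\eta,\beta)$ under $\tilde\pi$ is proper, so the whole statement is transported to the already-established theorem.

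Finally I would translate the trichotomy of Theorem \ref{fundtheo} back through the identity $r' = -r-2$. The key algebraic check is that $r' = -1$ is equivalent to $r = -1$, so the dichotomy ``$r'\neq -1$ yields an improper posterior'' becomes exactly ``$r\neq -1$ yields an improper posterior,'' giving the first assertion. When $p = 0$ and $r = -1$ (hence $r' = -1$), item (ii) of Theorem \ref{fundtheo} applies under the standing assumption of two distinct non-censored data, giving propriety if and only if $q' > -m$, i.e.\ $q > -m$, which is the second assertion. I expect no real obstacle beyond careful bookkeeping of the substitution; the single point I would emphasize is that the affine relation $r' = -r-2$ sends the excluded value $r=-1$ to the excluded value $r'=-1$, so that the improper case and the borderline propriety condition are transported consistently and no spurious shift in the threshold on $q$ arises.
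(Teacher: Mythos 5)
Your proposal is correct and takes essentially the same route as the paper: substitute $\theta = 1/\eta$ in the normalizing constant \eqref{posteriord22}, absorb the Jacobian $\eta^{-2}$ into an effective prior of the form covered by Theorem \ref{fundtheo}, and translate the trichotomy back through $r' = -r-2$. In fact your bookkeeping is more reliable than the paper's own write-up, which contains two typos in the effective prior (it writes $\eta^{-r+2}$ where the correct exponent is $-r-2$, and $\exp(-q\beta^{-1})$ where it should be $\exp(-p\beta^{-1})$); your observation that $r'=-r-2$ fixes $r=-1$ is exactly what makes the stated conclusion come out right, whereas the paper's misprinted exponent would not.
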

\begin{proof}
We have that 
\begin{equation*}
d(\boldsymbol{x})=\int_{0}^{\infty}\int_{0}^{\infty}\exp(-q\beta^{-1})\theta^{-m\beta+r}\beta^{m+q}\prod_{i=1}^nx_i^{\delta_i\beta}\exp \left\{-\sum_{i=1}^n \left(\frac{x_i}{\theta}\right)^\beta \right\}d\theta d\beta.
\end{equation*}
Thus, through the change of variables $\eta = \theta^{-1}$ it follows that
\begin{equation*}
d(\boldsymbol{x})=\int_{0}^{\infty}\int_{0}^{\infty}\pi_0(\eta,\beta)\eta^{m\beta}\beta^{m}\prod_{i=1}^nx_i^{\delta_i\beta}\exp \left\{-\sum_{i=1}^n \left(\eta x_i\right)^\beta \right\}d\theta d\beta
\end{equation*}
where $\pi_0(\eta,\beta)=\exp\left(-q\beta^{-1}\right)\eta^{-r+2}\beta^{q}$ and thus Theorem \ref{fundtheo} is applied using the prior $\pi_0(\eta,\beta)$ and implies in the proposed result.

\end{proof}

\begin{corollary}\label{maintheorem2wei2}
Let $\pi(\eta,\beta)$ be a class of improper priors such that
\begin{align*} \pi(\theta,\beta)\propto \theta^r\beta^q \end{align*}
where $r$ and $q$  are constants, and suppose the posterior related to $\pi(\theta,\beta)$ is proper. Then the posterior moments relative to $\beta$ are finite, and the posterior moments relative to $\theta$ are not finite.
\end{corollary}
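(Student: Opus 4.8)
The plan is to mirror the argument already used for Corollary \ref{maintheorem2wei}, but replacing Theorem \ref{fundtheo} by its $\theta$-parametrized counterpart Corollary \ref{fundtheo2}, which is precisely the properness criterion adapted to the posterior (\ref{final2}). First I would extract what the hypothesis buys us: since $\pi(\theta,\beta)\propto\theta^r\beta^q$ has no exponential factor (so $p=0$) and, by the standing assumption, at least two non-censored data are distinct, the \emph{only if} direction of Corollary \ref{fundtheo2} forces $r=-1$ and $q>-m$ (equivalently $m>-q$). These two constraints are the sole input needed for the remainder.

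For the moments of $\beta$, given any $k>0$ I would introduce the auxiliary prior $\pi^*(\theta,\beta)=\beta^k\pi(\theta,\beta)\propto\theta^{-1}\beta^{q+k}$. This still carries exponent $-1$ on $\theta$ and exponent $q+k$ on $\beta$; since $m>-q>-(q+k)$, Corollary \ref{fundtheo2} guarantees that the posterior associated with $\pi^*$ is again proper. Because multiplying the prior by $\beta^k$ is exactly multiplying the integrand of $d(\boldsymbol{x})$ by $\beta^k$, this properness statement is precisely $E[\beta^k|\boldsymbol{x}]<\infty$, and as $k>0$ is arbitrary every positive moment of $\beta$ is finite.

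For the moments of $\theta$ the mechanism reverses. For $k>0$ the auxiliary prior $\theta^k\pi(\theta,\beta)\propto\theta^{k-1}\beta^q$ now carries exponent $k-1$ on $\theta$, and since $k>0$ this exponent is not $-1$. Corollary \ref{fundtheo2} then declares the associated posterior improper; as the divergent integral is exactly $d(\boldsymbol{x})\,E[\theta^k|\boldsymbol{x}]$ and $d(\boldsymbol{x})<\infty$ by hypothesis, dividing by this finite constant yields $E[\theta^k|\boldsymbol{x}]=\infty$.

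The part carrying the real content is the $\theta$-moment step, and the point worth stressing is the asymmetry built into Corollary \ref{fundtheo2}: properness imposes a two-sided ``knife-edge'' constraint $r=-1$ on the $\theta$-exponent but only a one-sided lower bound $q>-m$ on the $\beta$-exponent. Consequently a positive power of $\beta$ merely shifts $q$ upward and preserves properness, whereas any positive power of $\theta$ moves the exponent off $-1$ and necessarily destroys it. The one thing to verify carefully is that $d(\boldsymbol{x})<\infty$, which is exactly the properness hypothesis, so that dividing the divergent integral by it is legitimate and the moment is genuinely infinite rather than indeterminate. (A reparametrization back to $\eta=1/\theta$ and appeal to Corollary \ref{maintheorem2wei} is possible in principle, but since $E[\theta^k|\boldsymbol{x}]=E[\eta^{-k}|\boldsymbol{x}]$ involves \emph{negative} powers of $\eta$ not treated there, the direct route through Corollary \ref{fundtheo2} is cleaner.)
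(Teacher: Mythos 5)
Your proof is correct, but it is organized differently from the paper's. The paper disposes of this corollary in one line: it pulls the moment integrals back to the $\eta$-parametrization via the change of variables $\eta=\theta^{-1}$ and cites Corollary \ref{maintheorem2wei}. You instead stay in the $\theta$-parametrization throughout and re-run the moment argument of Corollary \ref{maintheorem2wei}, with Corollary \ref{fundtheo2} playing the role of Theorem \ref{fundtheo}: properness pins down $r=-1$ and $q>-m$; multiplying the prior by $\beta^k$ keeps the $\theta$-exponent at $-1$ and only raises the $\beta$-exponent, so properness (hence finiteness of $E[\beta^k|\boldsymbol{x}]$) survives, while multiplying by $\theta^k$ moves the $\theta$-exponent off the knife-edge $-1$, so Corollary \ref{fundtheo2} gives improperness and, after dividing by the finite, positive constant $d(\boldsymbol{x})$, $E[\theta^k|\boldsymbol{x}]=\infty$. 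The trade-off: the paper's route is shorter, but, as your closing parenthetical correctly observes, it is slightly loose, since under the substitution $E[\theta^k|\boldsymbol{x}]$ becomes a \emph{negative} $\eta$-moment $E[\eta^{-k}|\boldsymbol{x}]$, and Corollary \ref{maintheorem2wei} as stated only covers positive powers of $\eta$; its proof technique does extend (one gets $\eta^{-k}\pi_0\propto\eta^{-k-1}\beta^q$ with $-k-1\neq -1$, and Theorem \ref{fundtheo} applies), but that extension is exactly what your direct argument supplies. So your version is marginally longer yet self-contained and patches that small gap, while both proofs ultimately rest on the same mechanism, since Corollary \ref{fundtheo2} is itself obtained from Theorem \ref{fundtheo} by the very change of variables the paper invokes.
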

\begin{proof} Just as in Theorem \ref{fundtheo2}, the proof is direct from Theorem \ref{maintheorem2wei} combined with an application of the change of variables $\eta = \theta^{-1}$ in the integral.
\end{proof}

\section{Discussion}

In this paper, we have studied the posterior properties of the Weibull distribution in the presence of complete and censored data. We proposed necessary and sufficient conditions to check if an improper prior distribution leads to a proper posterior. An exciting aspect of our finds is that one can check the posterior distribution is proper by analyzing only the behavior of the prior distribution. For proper posterior distributions, we also provided sufficient conditions to check if its higher posterior moments are finite or infinite.

The main theorem is used in different objective priors, such as the uniform prior, Jeffreys' first rule prior, Jeffreys prior, MDIP, and reference priors. We proved that among the considered priors, only the MDIP leads to an improper posterior. Although the priors above lead to proper posterior, we showed that none of the objective priors lead to finite posterior moments for the parameter $\eta$ and, therefore, should not be used as Bayes estimator. Further, the results were also presented for another standard parametrization, which can be used, for instance, in the objective prior obtained by Sun \cite{sun1997note}. Similar to the other parametrizations, the results showed that none of the objective priors lead to finite posterior moments for the parameter $\theta$. However, Sun \cite{sun1997note} used the posterior mean of $\theta$ in a real data example for complete data, but since we proved the posterior mean is not finite, it should not be used in practice. The find above is an interesting example where the MCMC methods may return a finite estimate when the true value is infinite. Hence, future researches should be careful in computing the posterior moments without checking its finiteness.

\section*{Acknowledgment}

Pedro L. Ramos is grateful to the S\~ao Paulo State Research Foundation (FAPESP Proc. 2017/25971-0). 

\section*{Reference}

\bibliography{referencias}

\begin{thebibliography}{10}
\expandafter\ifx\csname url\endcsname\relax
  \def\url#1{\texttt{#1}}\fi
\expandafter\ifx\csname urlprefix\endcsname\relax\def\urlprefix{URL }\fi
\expandafter\ifx\csname href\endcsname\relax
  \def\href#1#2{#2} \def\path#1{#1}\fi

\bibitem{weibull1951statistical}
W.~Weibull, et~al., A statistical distribution function of wide applicability,
  Journal of applied mechanics 18~(3) (1951) 293--297.

\bibitem{teimouri2013comparison}
M.~Teimouri, S.~M. Hoseini, S.~Nadarajah, Comparison of estimation methods for
  the weibull distribution, Statistics 47~(1) (2013) 93--109.

\bibitem{sun1997note}
D.~Sun, A note on noninformative priors for weibull distributions, Journal of
  Statistical Planning and Inference 61~(2) (1997) 319--338.

\bibitem{jeffreys1946theory}
H.~Jeffreys, An invariant form for the prior probability in estimation problems
  186~(1007) (1946) 453--461.

\bibitem{bernardo1979a}
J.~M. Bernardo, Reference posterior distributions for bayesian inference,
  Journal of the Royal Statistical Society. Series B (Methodological) (1979)
  113--147.

\bibitem{bernardo2005}
J.~M. Bernardo, Reference analysis, Handbook of statistics 25 (2005) 17--90.

\bibitem{berger2015}
J.~O. Berger, J.~M. Bernardo, D.~Sun, et~al., Overall objective priors,
  Bayesian Analysis 10~(1) (2015) 189--221.

\bibitem{tibshirani1989}
R.~Tibshirani, Noninformative priors for one parameter of many, Biometrika
  76~(3) (1989) 604--608.

\bibitem{moala2009note}
F.~A. Moala, J.~Rodrigues, V.~L.~D. Tomazella, A note on the prior
  distributions of weibull parameters for the reliability function,
  Communications in Statistics—Theory and Methods 38~(7) (2009) 1041--1054.

\bibitem{zellner1}
A.~Zellner, Maximal data information prior distributions, In New Methods in the
  applications of Bayesian Methods.

\bibitem{zellner2}
A.~Zellner, Maximal data information prior distributions, Basic Issues in
  Econometrics.

\bibitem{lee2015noninformative}
W.~D. Lee, S.~G. Kang, Y.~Kim, Noninformative priors for the common shape
  parameters of weibull distributions, Journal of the Korean Statistical
  Society 44~(4) (2015) 668--679.

\bibitem{kang2017noninformative}
S.~G. Kang, W.~D. Lee, Y.~Kim, Noninformative priors for the ratio of the shape
  parameters of two weibull distributions, Computational Statistics 32~(1)
  (2017) 35--50.

\bibitem{lee2017objective}
W.~D. Lee, S.~G. Kang, Y.~Kim, Objective bayesian inference for the ratio of
  the scale parameters of two weibull distributions, Communications in
  Statistics-Theory and Methods 46~(10) (2017) 4943--4956.

\bibitem{northrop2016}
P.~Northrop, N.~Attalides, Posterior propriety in bayesian extreme value
  analyses using reference priors, Statistica Sinica 26~(2).

\bibitem{kass1996selection}
R.~E. Kass, L.~Wasserman, The selection of prior distributions by formal rules,
  Journal of the American Statistical Association 91~(435) (1996) 1343--1370.

\bibitem{ramos2017}
P.~L. Ramos, J.~A. Achcar, F.~A. Moala, E.~Ramos, F.~Louzada, Bayesian analysis
  of the generalized gamma distribution using non-informative priors,
  Statistics 51~(4) (2017) 824--843.

\end{thebibliography}

\section*{Appendix A - Preliminaries}

In the appendix, $\overline{\mathbb{R}}$ will denote the extended real number line $\R\cup\{-\infty,\infty\}$ and the subscript $*$ in $\mathbb{R}$ and $\overline{\mathbb{R}}$ will denote the exclusion of $0$ in these sets.

\begin{definition}\label{definition0} Let $\f{g}:\mathcal{U}\to\overline{\mathbb{R}}_*^+$ and $\f{h}:\mathcal{U}\to\overline{\mathbb{R}}_*^+$, where $\mathcal{U}\subset\mathbb{R}$. We say that $\f{g}(x)\propto \f{h}(x)$ if there exists $c_0\in \mathbb{R}^+_*$ and $c_1\in \mathbb{R}^+_*$ such that $c_0\f{h}(x) \leq \f{g}(x) \leq c_1\f{h}(x)$ for every $x\in \mathcal{U}$.
\end{definition}

\begin{definition}\label{definition1}
Let $a\in \mathbb{\overline{R}}$, $\f{g}:\mathcal{U}\to\mathbb{R^+}$ and $\f{h}:\mathcal{U}\to\mathbb{R^+}$, where $\mathcal{U}\subset\mathbb{R}$. We say that $\f{g}(x)\underset{x\to a}{\propto} \f{h}(x)$ if
\begin{equation*}
\liminf_{x\to a} \frac{\f{g}(x)}{\f{h}(x)} > 0\ \mbox{ and }\ \limsup_{x\to a} \frac{\f{g}(x)}{\f{h}(x)} < \infty  \,.
\end{equation*}
The meaning of the relations $\f{g}(x)\underset{x\to a^+}{\propto} \f{h}(x)$ and $\f{g}(x)\underset{x\to a^-}{\propto} \f{h}(x)$ for $a\in \mathbb{R}$ are defined analogously.
\end{definition}

Note that, from the above definiton, if for some $c\in \mathbb{R}^+_*$ we have that $\lim_{x\to a} \frac{\f{g}(x)}{\f{h}(x)} = c$, then it will follow that $\f{g}(x)\underset{x\to a}{\propto} \f{h}(x)$. The following proposition is a direct consequence of the above definition, where the functions $f_1$, $f_2$, $g_1$ and $g_2$ bellow are supposed to be positive in their domain $\mathcal{U}\subset \R$.

 The following propositions gives us a relation between Definition \ref{definition0} and Definition \ref{definition1} and its proofs can be seen in Ramos et al. \cite{ramos2017}.

\begin{proposition}\label{proportional1}
Let $\f{g}:(a,b)\to\mathbb{R^+}$ and $\f{h}:(a,b)\to\mathbb{R^+}$ be continuous functions on $(a,b)\subset\mathbb{R}$, where $a\in\overline{\mathbb{R}}$ and $b\in\overline{\mathbb{R}}$. Then $\f{g}(x)\propto \f{h}(x)$ if and only if $\f{g}(x)\underset{x\to a}{\propto} \f{h}(x)$ and $\f{g}(x)\underset{x\to b}{\propto} \f{h}(x)$.
\end{proposition}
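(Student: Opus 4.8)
The plan is to work throughout with the ratio $r(x)=\f{g}(x)/\f{h}(x)$, which is well defined, continuous, and strictly positive on $(a,b)$ because both $\f{g}$ and $\f{h}$ are continuous and positive there. Under this reformulation the two definitions become statements about $r$: by Definition \ref{definition0}, $\f{g}\propto \f{h}$ holds iff $r$ is bounded above and bounded away from $0$ on all of $(a,b)$, while by Definition \ref{definition1} the endpoint relations hold iff $r$ has finite $\limsup$ and strictly positive $\liminf$ at each of $a$ and $b$. Thus the proposition reduces to the elementary claim that a positive continuous function on an open interval is globally trapped between two positive constants if and only if it has finite $\limsup$ and positive $\liminf$ at both endpoints.

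For the forward implication I would simply observe that if $c_0\f{h}(x)\le \f{g}(x)\le c_1\f{h}(x)$ on $(a,b)$ then $c_0\le r(x)\le c_1$ everywhere, whence $\liminf_{x\to a} r\ge c_0>0$ and $\limsup_{x\to a} r\le c_1<\infty$, and likewise at $b$; this is immediate from the definitions and requires no further work.

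The substantive direction is the converse. Since $\limsup_{x\to a} r<\infty$ and $\liminf_{x\to a} r>0$, I would extract a one-sided neighborhood $(a,a+\delta_a)$ of $a$ (read as $(-\infty,-K)$ when $a=-\infty$) on which $r$ is trapped between two positive constants $m_a$ and $M_a$, and the symmetric argument at $b$ produces a neighborhood $(b-\delta_b,b)$ with $m_b\le r\le M_b$. If these two neighborhoods already exhaust $(a,b)$ the proof is complete; otherwise the leftover region $[a+\delta_a,\,b-\delta_b]$ is a closed bounded, hence compact, subinterval, and the key step is the extreme value theorem: $r$, being continuous and positive on this compact set, attains a finite maximum $M_\ast$ and a positive minimum $m_\ast$ there. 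Taking $c_0=\min\{m_a,m_b,m_\ast\}>0$ and $c_1=\max\{M_a,M_b,M_\ast\}<\infty$ then yields $c_0\le r(x)\le c_1$ on all of $(a,b)$, which is precisely $\f{g}\propto \f{h}$.

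The only genuine obstacle is bookkeeping rather than analysis: one must confirm that the ``middle'' interval is compact in every configuration of the endpoints, including $a=-\infty$ and $b=+\infty$. Once the two endpoint neighborhoods are fixed, however, the complementary region is always a closed bounded interval of the real line, so compactness — and hence the existence of positive finite extrema of $r$ — never fails, and combining the three pairs of bounds is then routine.
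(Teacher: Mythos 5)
Your proof is correct and complete. Note, however, that the paper itself does not prove Proposition \ref{proportional1} at all: it simply defers to the reference Ramos et al.\ \cite{ramos2017}, so there is no in-paper argument to compare against. Your argument supplies exactly what that citation hides, and it does so by the standard (and essentially only natural) route: reduce both sides to statements about the ratio $r(x)=\f{g}(x)/\f{h}(x)$, observe the forward implication is immediate, and for the converse combine the two endpoint bounds (extracted from the $\liminf$/$\limsup$ hypotheses on one-sided neighborhoods of $a$ and $b$) with the extreme value theorem applied to the continuous positive function $r$ on the compact leftover interval. You also handle the two configurations that are easy to overlook — the endpoint neighborhoods already covering $(a,b)$, and infinite endpoints, where the ``neighborhoods'' are unbounded rays but the middle region stays a closed bounded interval — so no gap remains. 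The one benefit of your write-up over the paper's citation is precisely this self-containedness; it makes the appendix's later uses of Proposition \ref{proportional1} (e.g.\ in establishing $\sum_{i=1}^n x_i^\beta \propto (x_{\max})^\beta$ over $(0,\infty)$) verifiable without consulting an external source.
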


\begin{proposition}\label{proportional2} Let $\f{g}:(a,b)\to\mathbb{R^+}$ and $\f{h}:(a,b)\to\mathbb{R^+}$ be continuous functions in $(a,b)\subset\mathbb{R}$, where $a\in\overline{\mathbb{R}}$ and $b\in\overline{\mathbb{R}}$, and let $c\in(a,b)$. Then, if either $\f{g}(x)\underset{x\to a}{\propto} \f{h}(x)$ or $\f{g}(x)\underset{x\to b}{\propto} \f{h}(x)$, it will follow respectively that
\begin{equation*}
\int_a^c g(x)\; dx \propto \int_a^c h(x)\; dx\ \mbox{ or }\ \int_c^b g(x)\; dx \propto \int_c^b h(x)\; dx \,.
\end{equation*}
\end{proposition}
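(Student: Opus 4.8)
The plan is to reduce the statement to a single pointwise sandwiching near the endpoint and then exploit monotonicity of the integral, isolating the ``far'' piece of the integral by continuity. I will prove the version under the hypothesis $g(x) \underset{x\to a}{\propto} h(x)$; the version for $b$ is entirely symmetric, splitting at a point of $(c,b)$ instead of $(a,c)$. Throughout, the integrals of the nonnegative functions $g,h$ are understood as improper integrals with values in $[0,\infty]$, so they are always well defined, and the target conclusion $\int_a^c g \propto \int_a^c h$ is the existence of constants $c_0,c_1\in\mathbb{R}^+_*$ with $c_0\int_a^c h \leq \int_a^c g \leq c_1\int_a^c h$, in the sense of Definition \ref{definition0}.

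First I would unpack the hypothesis. Writing $\liminf_{x\to a} g/h =: L > 0$ and $\limsup_{x\to a} g/h =: M < \infty$ (so $0<L\leq M<\infty$), the definitions of $\liminf$ and $\limsup$ furnish constants $0 < \lambda \leq \mu < \infty$ and a point $d \in (a,c)$ such that $\lambda\, h(x) \leq g(x) \leq \mu\, h(x)$ for every $x \in (a,d)$; concretely one may take $\lambda = L/2$ and $\mu = M+1$, using that $g/h>L/2$ on some one-sided neighborhood of $a$ and $g/h<M+1$ on another, then intersecting and shrinking below $c$. It is irrelevant here whether $a$ is finite or $-\infty$: in both cases ``$x\to a$'' refers to a one-sided neighborhood inside $(a,b)$, and shrinking it lets us take the right endpoint $d$ strictly below $c$.

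Next I would split $\int_a^c g = \int_a^d g + \int_d^c g$. On the tail $(a,d)$, monotonicity of the integral of nonnegative functions upgrades the pointwise bound to $\lambda \int_a^d h \leq \int_a^d g \leq \mu \int_a^d h$, an inequality valid in $[0,\infty]$ regardless of convergence. On the compact interval $[d,c]\subset(a,b)$ both $g$ and $h$ are continuous and strictly positive, so $\int_d^c g$ and $\int_d^c h$ lie in $\mathbb{R}^+_*$; setting $K := \int_d^c g \,/\, \int_d^c h$ we have $\int_d^c g = K \int_d^c h$ exactly. Comparing against $\int_a^c h = \int_a^d h + \int_d^c h$ term by term then yields $\min(\lambda,K)\int_a^c h \leq \int_a^c g \leq \max(\mu,K)\int_a^c h$, so $c_0=\min(\lambda,K)$ and $c_1=\max(\mu,K)$ give the claimed relation $\int_a^c g \propto \int_a^c h$.

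The step I expect to require the most care is keeping these constants honest across the convergent and divergent cases at once. It is tempting to argue only that $\int_a^c g$ converges if and only if $\int_a^c h$ does, but the conclusion is the stronger two-sided bound of Definition \ref{definition0}; the device that makes this painless is absorbing the finite middle piece into the single constant $K$ and comparing termwise, so no separate finiteness case analysis is needed. One should still verify that the displayed inequalities read correctly when $\int_a^c h = \infty$: then $\int_a^d h$ diverges, the lower tail sandwich forces $\int_a^d g = \infty$, and both inequalities degenerate to $\infty \leq \infty$. A secondary point is that continuity on $[d,c]$ is genuinely used, precisely to guarantee $\int_d^c g,\ \int_d^c h \in \mathbb{R}^+_*$, which is where the continuity hypothesis earns its place.
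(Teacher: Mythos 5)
Your proof is correct and complete: splitting at a point $d\in(a,c)$ where the two-sided bound $\lambda h\leq g\leq\mu h$ holds, using compactness and positivity on $[d,c]$ to absorb the middle piece into a single constant $K$, and comparing termwise is exactly the standard argument, and you handle the divergent case cleanly. Note that the paper itself does not prove this proposition --- it defers to Ramos et al.\ \cite{ramos2017} --- so your argument supplies a self-contained proof of what the paper only cites, and there is no in-paper proof to diverge from.
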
 

\section*{Appendix B - Proof of Theorem 2.1}

During this proof we shall use the fact that $\int_0^\infty x^{a-1}\exp\left(-b x\right)\, dx <\infty$ if and only if $a>0$, which can be easily proved for $a\leq 0$ by using Proposition \ref{proportional1} and the fact that $\exp\left(-b u\right)\underset{\alpha\to 0}{\propto} 1$, and for $a>0$ by the change of variables $bx=y$ in the integral and the definition of Gamma function.

Before we prove the items we develop the integrals used. From the hypothesis it follows that
\begin{align*}
\pi(\eta,\beta|\boldsymbol{x})& =  \pi(\eta,\beta)\eta^{m\beta}\beta^{m}\left(\prod_{i=1}^n x_i^{\delta_i\beta}\right)\exp \left\{-\sum_{i=1}^n x_i^\beta \eta^{\beta}\right\} \\
& \propto \exp\left(-p\beta^{-1}\right)\eta^{m\beta+r}\beta^{m+q}\left(\prod_{i=1}^n x_i^{\delta_i\beta}\right)\exp \left\{-\sum_{i=1}^n x_i^\beta \eta^{\beta}\right\}.
\end{align*}
Thus, from the change of variable $u=\sum_{i=1}^n x_i^{\beta} \eta^{\beta}$ in the integral it follows that
\begin{align*}
d(x)&\propto \int\limits_0^\infty\int\limits_0^{\infty} \exp\left(-p\beta^{-1}\right)\beta^{m+q}\left(\prod_{i=1}^n x_i^{\delta_i\beta}\right)\eta^{m\beta+r}\exp \left\{-x_i^\beta\eta^{\beta}\right\} \, d\eta  \, d\beta,\\
&=  \int\limits_0^\infty \exp\left(-p\beta^{-1}\right) \beta^{m+q-1}\dfrac{\left(\prod_{i=1}^n{x_i^{\delta_i \beta}}\right)}{\left(\sum_{i=1}x_i^\beta\right)^{m+\frac{r+1}{\beta}}}\int\limits_0^{\infty}u^{m-1+\frac{r+1}{\beta}}\exp\{-u\} \, du  \, d\beta
\end{align*}

On the other hand, if we denote $x_{\max}=\max_{1\leq i\leq n} x_i$ and if $k\in \N$ correspond to the number of indexes $i\in\{1,\cdots,n\}$ such that $x_i=x_{\max}$ then, since $\lim_{\beta\to \infty} \frac{x_i^\beta}{x_{\max}^{\beta}} = 0$ for all $x_i< x_{\max}$ it follows that
\begin{equation*}\lim_{\beta\to \infty} \frac{\left(\sum_{i=1}^n x_i^\beta\right)}{ (x_{\max})^{\beta}} = k \Rightarrow \sum_{i=1}^n x_i^\beta\underset{\beta\to \infty}{\propto} (x_{\max})^{\beta},
\end{equation*}
and, moreover, trivially it follows that
\begin{equation*}\lim_{\beta\to 0} \frac{\left(\sum_{i=1}^n x_i^\beta\right)}{(x_{\max})^{\beta}}=1 \Rightarrow \sum_{i=1}^n x_i^\beta\underset{\beta\to 0}{\propto} (x_{\max})^{\beta}.
\end{equation*}
Thus, from Proposition \ref{proportional1} it follows that $\left(\sum_{i=1}^n x_i^\beta\right)\propto (x_{\max})^{\beta}$ in $[0,\infty)$ and therefore
\begin{equation}\label{important}
d(\boldsymbol{x}) \propto  \int\limits_0^\infty \exp\left(-p\beta^{-1}\right) \beta^{m+q-1}\dfrac{\left(\prod_{i=1}^n{x_i^{\delta_i \beta}}\right)}{(x_{\max})^{m\beta}}\int\limits_0^{\infty}u^{m-1+\frac{r+1}{\beta}}\exp\{-u\} \, du  \, d\beta.
\end{equation}\vspace{0.3cm}

\noindent Proof of item $(i)$: We divide the proof in the cases where $r<-1$ and $r>-1$.

First let us suppose that $r<-1$ and $m=0$. Then it follows from (\ref{important}) that
\begin{equation*}
d(\boldsymbol{x}) \propto  \int\limits_0^\infty \exp\left(-p\beta^{-1}\right) \beta^{q-1}\int\limits_0^{\infty}u^{-1+\frac{r+1}{\beta}}\exp\{-u\} \, du  \, d\beta.
\end{equation*}
and since $\frac{r+1}{\beta}<0$ for all $\beta>0$ it follows that $\int\limits_0^{\infty}u^{-1+\frac{r+1}{\beta}}\exp\{-u\} \, du=\infty$ for all $\beta>0$ and thus $d(x)=\infty$ in this case.

Now let us suppose that $r<-1$ and $m>0$. Therefore, for every $\beta\in \left(0,-\frac{r+1}{m}\right]$ it follows that $m+\frac{r+1}{\beta}<0$ and thus
\begin{equation*}
\begin{aligned}
\exp\left(-p\beta^{-1}\right) \beta^{m+q-1}\dfrac{\left(\prod_{i=1}^n{x_i^{\delta_i \beta}}\right)}{(x_{\max})^{m\beta}}\int\limits_0^\infty u^{m-1+\frac{r+1}{\beta}}\exp\{-u\}\, d\eta= \infty,
\end{aligned}
\end{equation*}
for all $\beta\in (0,\frac{r+1}{m}]$. Therefore it follows from (\ref{important}) that
\begin{equation*}
\begin{aligned}
d(\boldsymbol{x}) \propto  &\int\limits_0^\infty \exp\left(-p\beta^{-1}\right) \beta^{m+q-1}\dfrac{\left(\prod_{i=1}^n{x_i^{\delta_i \beta}}\right)}{(x_{\max})^{m\beta}}\int\limits_0^{\infty}u^{m-1+\frac{r+1}{\beta}}\exp\{-u\} \, du  \, d\beta \\ 
\geq &\int\limits_0^{\frac{r+1}{m}} \exp\left(-p\beta^{-1}\right) \beta^{m+q-1}\dfrac{\left(\prod_{i=1}^n{x_i^{\delta_i \beta}}\right)}{(x_{\max})^{m\beta}} \int\limits_0^{\infty}u^{m-1+\frac{r+1}{\beta}}\exp\{-u\} \, du  \, d\beta  = \infty,
\end{aligned}
\end{equation*}
and hence $d(\boldsymbol{x})=\infty$ in case $r<-1.$

Now let us suppose that $r> -1$. Therefore, from (\ref{important}) and letting $h=\log\left(\dfrac{\left(x_{\max}\right)^m}{\prod_{i=1}^n{x_i^{\delta_i}}}\right)\geq 0$ it follows that
\begin{equation}\label{important2}
d(\boldsymbol{x}) \propto  \int\limits_0^\infty \exp\left(-p\beta^{-1}\right)\beta^{m+q-1}\exp\left(-h\beta\right)\Gamma\left(m+\frac{r+1}{\beta}\right)  \, d\beta
\end{equation}
Thus, using the change of variables $\alpha = m+\frac{r+1}{\beta}$ in the integral on (\ref{important2}), and using that $\exp\left(\frac{p(\alpha-m)}{r+1}\right)= \exp\left(\frac{-pm}{r+1}\right)\exp\left(\frac{p\alpha}{r+1}\right) \propto \exp\left(\frac{p\alpha}{r+1}\right)$ it follows that
\begin{equation}\label{final}
d(\boldsymbol{x}) \propto  \int\limits_m^\infty \frac{\Gamma\left(\alpha\right)}{\exp\left(\frac{p\alpha}{r+1}\right)(\alpha-m)^{m+q+1}\exp\left(\frac{h(r+1)}{\alpha-m}\right)}  \, d\alpha .
\end{equation}
On the other hand, from the Stirling formula and since $\lim_{\alpha\to \infty} \exp\left(\frac{h(r+1)}{\alpha-m}\right)=1$ and $\lim_{\alpha\to \infty}\frac{(a-m)^{m+q+1}}{\alpha^{m+q+1}}=1$, it follows that

\begin{align*}\lim_{\alpha\to \infty} \frac{\Gamma\left(\alpha\right)}{\exp\left(\frac{p\alpha}{r+1}\right)(\alpha-m)^{m+q+1}\exp\left(\frac{h(r+1)}{\alpha-m}\right)} = \sqrt{2\pi}\lim_{\alpha\to \infty} \frac{\alpha^{\alpha-\frac{1}{2}}\exp(-\alpha)}{\exp\left(\frac{p\alpha}{r+1}\right)\alpha^{m+q+1}}\\
=\sqrt{2\pi}\lim_{\alpha\to \infty}\exp\left(\alpha\ln(\alpha)-\frac{1}{2}\ln(\alpha)-\alpha-\frac{p\alpha}{r+1}-\ln(\alpha)\left(m+q+1\right)\right)\\
=\sqrt{2\pi}\lim_{\alpha\to \infty}\exp\left(\alpha\left(\ln(\alpha)-\left(1+\frac{p}{r+1}\right)-\frac{\ln(\alpha)}{\alpha}\left(m+q+\frac{3}{2}\right)\right) \right)=\infty,
\end{align*}
where the last inequality follows directly from $\lim_{\alpha\to\infty}\frac{\ln(\alpha)}{\alpha}=0$ and $\lim_{\alpha\to \infty}\ln(\alpha)=\infty$. Therefore from (\ref{final}) we conclude that $d(x) = \infty$ in case $r>-1$, and thus item $(i)$ is proved.\vspace{0.3cm}

Proof of item $(ii)$: If we suppose that $r=-1$ and $p=0$ then, once again from (\ref{important}) and letting $h=\log\left(\dfrac{\left(x_{\max}\right)^m}{\prod_{i=1}^n{x_i^{\delta_i}}}\right)\geq 0$, it follows that
\begin{equation*}
\begin{aligned}
d(\boldsymbol{x}) \propto\int\limits_0^\infty  \beta^{m+q-1}\exp\left(-h\beta\right)\Gamma(m)\, d\beta\propto \int\limits_0^\infty  \beta^{m+q-1}\exp\left(-h\beta\right)\, d\beta,
\end{aligned}
\end{equation*}
and since by hypothesis of item $(ii)$ there are at least two distinct non-censored data, it follows that $h=\log\left(\dfrac{\left(x_{\max}\right)^m}{\prod_{i=1}^n x_i^{\delta_i}}\right)>0$ and thus by the last proportionality it follows that $d(\boldsymbol{x})<\infty$ if and only if $m+q>0$, which concludes the proof of item $(ii)$.\vspace{0.3cm}

Proof of item $(iii)$:  If we suppose that $r=-1$, $p=0$ and $m= 1$, then from (\ref{important}) it follows that
\begin{equation*}
\begin{aligned}
d(\boldsymbol{x}) \propto\int\limits_0^\infty  \beta^{q-1}\int\limits_0^\infty \exp\left\{-u\right\}\, du\, d\beta=  \int\limits_0^\infty  \beta^{q-1}\, d\beta,
\end{aligned}
\end{equation*}
and thus $d(x)=\infty$ in this case.

Finally if we suppose that $r=-1$, $p=0$ and $m=0$ then from (\ref{important}) it follows directly that
\begin{equation}
d(\boldsymbol{x}) \propto  \int\limits_0^\infty \beta^{q-1}\int\limits_0^{\infty}u^{-1}\exp\{-u\} \, du  \, d\beta,
\end{equation}
and since $\int_0^\infty u^{-1}\exp(-u)\, du=\infty$, it follows that $d(x)=\infty$, which completes the proof.

\end{document}